\newtheorem{thm}{Theorem}
\newtheorem{lm}{Lemma}
\newtheorem{vr}{Variant}
\newtheorem{remark}[lm]{Remark}
\newenvironment{proof}[1][Proof]{\noindent\textbf{#1.} }{\ \rule{0.5em}{0.5em}}
\begin{document}

\author{Jin-ichi Itoh
\and J\"{o}el Rouyer
\and Costin V\^{\i}lcu}
\title{On the Theorem of the Three Perpendiculars}
\maketitle

\begin{abstract}
We show that the theorem of the three perpendiculars holds in any
$n$-dimensional space form.

\end{abstract}
\date{}

\addtocounter{thm}{-1} \renewcommand{\thevr}{\Roman{vr}}

\section{Introduction}

In geometry text books, the following theorem is usually known as the theorem
of the three perpendiculars.

\begin{thm}
\label{TZ}(In the $3$-dimensional Euclidean space) Assume the point $x$ is
outside the plane $\Pi$ and the line $\Lambda$ is included in $\Pi$; if $xy$
is orthogonal to $\Pi$, with $y\in\Pi\setminus\Lambda$, and $yz$ is orthogonal
to $\Lambda$, with $z\in\Lambda$, then $xz$ is orthogonal to $\Lambda$.
\end{thm}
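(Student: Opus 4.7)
The plan is to reduce the conclusion to the bilinearity of the Euclidean inner product. Taking $y$ as origin and letting $\vec{u}$ denote a direction vector of $\Lambda$, I would carry out three short steps.

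First, $\vec{u}\perp\overrightarrow{yx}$: indeed $xy\perp\Pi$ says $\overrightarrow{yx}$ is normal to $\Pi$, while $\Lambda\subset\Pi$ forces $\vec{u}$ to lie in the directional plane of $\Pi$. Second, the hypothesis $yz\perp\Lambda$ is by definition $\vec{u}\cdot\overrightarrow{yz}=0$. Third, writing $\overrightarrow{xz}=\overrightarrow{xy}+\overrightarrow{yz}=-\overrightarrow{yx}+\overrightarrow{yz}$ and pairing with $\vec{u}$ yields
\[
\vec{u}\cdot\overrightarrow{xz}=-\vec{u}\cdot\overrightarrow{yx}+\vec{u}\cdot\overrightarrow{yz}=0,
\]
which is exactly $xz\perp\Lambda$. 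An equivalent synthetic formulation is that $\Lambda$ is orthogonal to the two distinct concurrent lines $xy$ and $yz$, hence to the plane they span, which contains $xz$.

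A coordinate variant would pick an orthonormal frame with $y$ at the origin, $\Pi=\{x_3=0\}$, and $\Lambda$ parallel to the $x_1$-axis; then $x=(0,0,h)$ with $h\neq0$, the hypothesis $yz\perp\Lambda$ forces $z=(0,a,0)$, and $\overrightarrow{xz}=(0,a,-h)$ visibly annihilates $(1,0,0)$.

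No real obstacle is expected: this is the textbook case, and its Euclidean proof is essentially one line of inner-product algebra. The only care needed is to observe that $x\neq y$ (from $x\notin\Pi$) and $z\neq y$ (from $y\notin\Lambda\ni z$), so that the vectors and the $2$-plane they span are well defined. The real substance of the paper should lie in extending this argument to hyperbolic and spherical space forms, where vector addition is unavailable and the three perpendiculars must be handled by intrinsically metric tools.
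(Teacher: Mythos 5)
Your argument is correct and complete for the Euclidean statement: the identity $\vec u\cdot\overrightarrow{xz}=-\,\vec u\cdot\overrightarrow{yx}+\vec u\cdot\overrightarrow{yz}=0$ settles it, and your remarks that $x\neq y$ and $z\neq y$ take care of the degeneracies. It is, however, a genuinely different route from the paper's. The paper never touches position vectors: it first passes to Gupta's reformulation (Variant \ref{VG}) in terms of four points and four right triangles, and proves that by a purely metric, synthetic argument --- reflect $u$ through $z$ to get $v$, use two triangle congruences to obtain $d(x,u)=d(x,v)$, and conclude that the equal supplementary angles $\measuredangle xzu=\measuredangle xzv$ must both be $\pi/2$. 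The payoff of that extra work is exactly the point you flag at the end: the congruence argument uses nothing but distances, angles and right angles, so it transfers verbatim to hyperbolic and spherical space forms, whereas your computation is wedded to the affine structure of $\mathbb{R}^3$ (though it does extend for free to $\mathbb{R}^n$ and to nested subspaces of arbitrary dimensions, i.e.\ the Euclidean case of Variant \ref{VN}). Of the paper's two further proofs, the one closest in spirit to yours is the projection approach (Variant \ref{VP}): it is essentially the same linear algebra --- the decomposition $E=F^{\perp}\oplus\left(G^{\perp}\cap F\right)\oplus G$ giving $g=g\circ f$ for nested orthogonal projections --- but carried out in the ambient (pseudo-)Euclidean space of the model and then pushed down to the sphere or hyperboloid by radial projection, which is what restores the generality your version lacks.
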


%

%TCIMACRO{\FRAME{ftbpFU}{3.2344in}{2.4327in}{0pt}{\Qcb{Equivalence between
%Theorem \ref{TZ} and Variant \ref{VG}.}}{\Qlb{F3}}{fig3.eps}%
%{\special{ language "Scientific Word";  type "GRAPHIC";
%maintain-aspect-ratio TRUE;  display "USEDEF";  valid_file "F";
%width 3.2344in;  height 2.4327in;  depth 0pt;  original-width 5.1716in;
%original-height 3.8787in;  cropleft "0";  croptop "1";  cropright "1";
%cropbottom "0";  filename 'Fig3.eps';file-properties "XNPEU";}}}%
%BeginExpansion
\begin{figure}
[ptb]
\begin{center}
\includegraphics[
height=2.4327in,
width=3.2344in
]%
{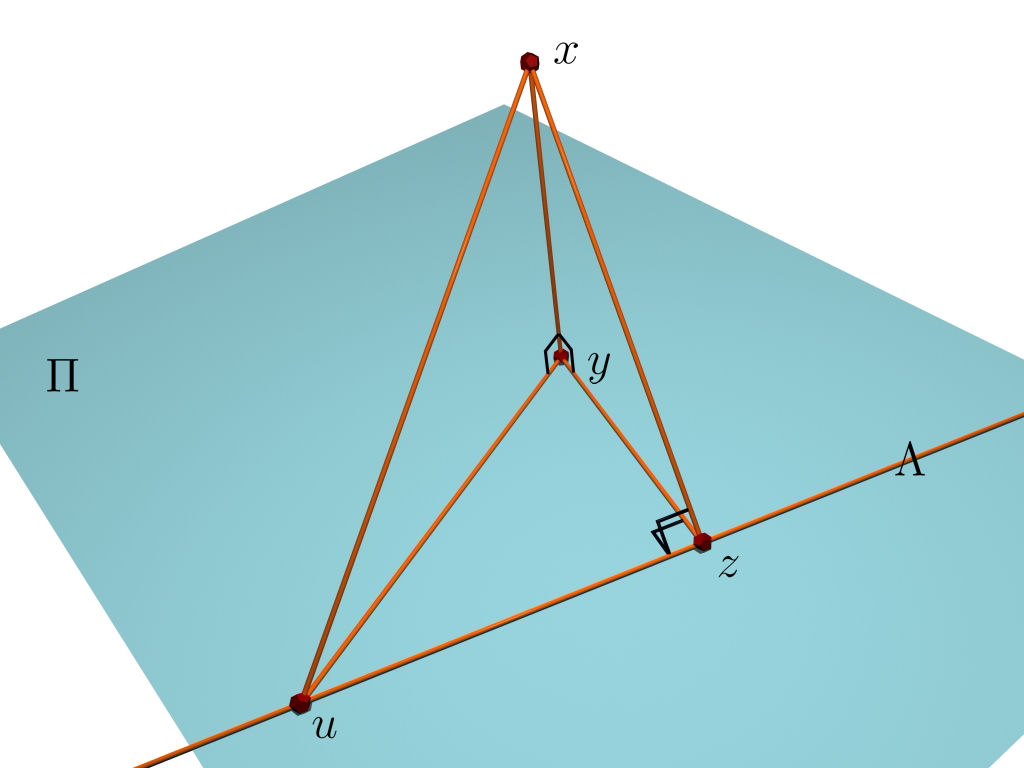}%
\caption{Equivalence between Theorem \ref{TZ} and Variant \ref{VG}.}%
\label{F3}%
\end{center}
\end{figure}
%EndExpansion

In a very short note \cite{Gupta}, H. N. Gupta observed (see Figure \ref{F3})
that Theorem \ref{TZ} is equivalent to

\begin{vr}
\label{VG}(In the $n$-dimensional Euclidean space) Let $x$, $y$, $z$, $u$ be
four pairwise distinct points. If the triangles $xyz$, $xyu$, $yzu$ are right
at $y$, $y$ and $z$ respectively, then the triangle $xzu$ is right at $z$.
\end{vr}

For the completeness of this note, we give a short classical proof of this fact.

\begin{proof}
Let $v$ be the symmetric point of $u$ with respect to $z$. Since $uzy$ and
$vzy$ are right at $z$ and $d\left(  u,z\right)  =d\left(  v,z\right)  $, the
triangles $uzy$ and $vzy$ are congruent and $d\left(  u,y\right)  =d\left(
v,y\right)  $. Since $xy$ is orthogonal to the $2$-space through $y$, $z$ and
$u$, the triangle $xyv$ is right at $y$. It follows that the triangles $xyv$
and $xyu$ are congruent and $d\left(  x,v\right)  =d\left(  x,u\right)  $.
Hence the triangles $xzv$ and $xzu$ are congruent, whence $\measuredangle
xzv=\measuredangle xzu$. Moreover $\measuredangle xzv+\measuredangle xzu=\pi$,
whence $\measuredangle xzv=\measuredangle xzu=\pi/2$.
\end{proof}

H. N. Gupta noted that Variant \ref{VG} does not emphasize any plane $\Pi$ or
line $\Lambda$, and so implies that Theorem \ref{TZ} holds for spaces of any
dimensions. He noticed that the proof of Variant \ref{VG} holds as well for
hyperbolic spaces, but he mentioned nothing about the spherical case. Instead,
he focused on the fact that the triangles mentioned in Variant \ref{VG} can be
exchanged: \emph{whenever three of the four mentioned triangles are right, the
fourth one is also right}. This last statement does not hold for spheres; see
Remark \ref{R1}.

In order to state Theorem \ref{TZ} in other spaces, one has to replace the
Euclidean lines by geodesics and the Euclidean planes with totally geodesic
surfaces. Consequently, in the statement of Variant \ref{VG}, Euclidean
triangles have to be replaced by geodesic triangles.

It is straightforward to see that the above proof of Variant \ref{VG} remains
valid in any simply connected space forms. In this note we give two new proofs
for Theorem \ref{TZ} following two distinct and more descriptive approaches;
it is just a pretext to play with standard models of space forms and their
subspaces. Moreover, the second proof is valid for any space form. We are
convinced that other methods of proof can be imagined, for example by the use
of isometries. We believe that Theorem \ref{TZ} remains valid in other spaces;
it may be interesting to find them.

\section{Preliminaries}

In a space form $\Omega$, by definition, an $n$-\emph{space} is an
$n$-dimensional totally geodesic complete submanifold. A \emph{geodesic }is
always supposed to be maximal, or, in other words, to be a $1$-space. A
geodesic through $x$ and $y$ is denoted by $xy$.

With this definition, we can give the following statement, more general than
Variant \ref{VG}.

\begin{vr}
\label{VN}(In any $n$-dimensional simply connected space form). Assume the
point $x$ is outside the $p$-space $\Pi$ and the $q$-space $\Lambda$ is
included in $\Pi$ ($n>p>q>0$). If $xy$ is orthogonal to $\Pi$, with $y\in
\Pi\setminus\Lambda$, and $yz$ is orthogonal to $\Lambda$, with $z\in\Lambda$,
then $xz$ is also orthogonal to $\Lambda$.
\end{vr}

\begin{proof}
Notice first that the proof of Variant \ref{VG} holds in any simply connected
space form. A geodesic $xz$ is orthogonal to a $q$-space $\Lambda\ni z$ if and
only if all triangles $xzu$, $u\in\Lambda$ are right at $z$. This fact and
Variant \ref{VG} yield the conclusion.
\end{proof}

For the next proofs, we need explicit models. For hyperbolic spaces, there
exist several standard models; we chose the one which is formally similar to
the standard model of spheres.

If $\Omega$ is a sphere of dimension $n$, then it is supposed to be the unit
sphere in the space $\mathbb{R}^{n+1}$ endowed with the canonical inner
product $\left\langle ~~,~~\right\rangle $.

Similarly, if $\Omega$ is the $n$-dimensional hyperbolic space, it is supposed
to be embedded in $\mathbb{R}^{1,n}$ as%
\[
\set(:\left(  x_{0},\ldots,x_{n}\right)  \in\mathbb{R}^{1,n}|\left\langle
x,x\right\rangle =1,x_{0}>0:)\text{.}%
\]
Recall that $\mathbb{R}^{1,n}$ is the space $\mathbb{R}^{n+1}$ endowed with
the pseudo-Euclidean inner product%
\[
\left\langle x,y\right\rangle =x_{0}y_{0}-x_{1}y_{1}-\ldots-x_{n}y_{n}\text{,}%
\]
where $x_{i}$ denotes the $i^{\text{th}}$ coordinate of $x$, the index $i$
starting from $0$.

For the uniformity of the presentation, the same bracket notation stands for
two distinct products, according to the case.

We will refer to $\mathbb{R}^{n+1}$ or $\mathbb{R}^{1,n}$ as the \emph{ambient
space}. For any subset $P$ of $\Omega$, $\mathrm{Sp}\left(  P\right)  $ stands
for the linear subspace of the ambient space spanned by $P$; in particular,
$\mathrm{Sp}\left(  \Omega\right)  $ is the whole ambient space.

The formula expressing the distance between two points $x$ and $y$ of $\Omega$
is:
\begin{align*}
d\left(  x,y\right)   &  =\arccos\left<  x,y\right>  ~\text{if }%
\Omega~\text{is a sphere,}~\text{and}\\
d\left(  x,y\right)   &  =\mathrm{\mathrm{arcosh}}\left<  x,y\right>
~\text{if }\Omega~\text{is a hyperbolic space.}%
\end{align*}

Another parallelism is the description of $p$-spaces: $\Phi\subset\Omega$ is a
$p$-space if and only if there exists a linear subspace $F\subset
\mathrm{Sp}\left(  \Omega\right)  $ such that $\Phi=F\cap\Omega$. Of course,
$F=\mathrm{Sp}\left(  \Phi\right)  $.

%%%%%%%%%%%%%%%%%%%%%%%%%%%%

\section{The projection approach\label{SP}}

This approach begins with the observation that Theorem \ref{TZ} is an obvious
corollary of its following variant (choose $E=\mathbb{R}^{3}$, $F=\Pi$,
$G=\Lambda$)

\begin{vr}
\label{VP}Let $\left(  E,\left\langle \cdot,\cdot\right\rangle \right)  $ be a
pseudo-Euclidean vector space. Let $F$ be a proper subspace of $E$, and $G$ a
proper subspace of $F$. Assume that the restrictions of the inner product to
$E\times E$ and $F\times F$ are non-degenerate. Let $f$ and $g$ be the
orthogonal projections onto $F$ and $G$ respectively. Then $g=g\circ f$.
\end{vr}

\begin{proof}
Let $H\overset{\mathrm{def}}{=}G^{\perp}\cap F$. We have $M=F^{\bot}\oplus
H\oplus G$, and if $x=x_{F^{\perp}}+x_{H}+x_{G}$, with $x_{F^{\perp}}\in
F^{\bot}$, $x_{H}\in H$, $x_{G}\in G$, then $f\left(  x\right)  =x_{H}+x_{G}$,
$g\left(  x\right)  =x_{G}$.
\end{proof}

In a Euclidean space $E$, there are two equivalent ways to define the
projection $f$ onto the linear subspace $F$. The point $f\left(  x\right)  $
is at the same time the only point $y\in F$ such that the line $xy$ is
orthogonal to $F$ (\ie, the \emph{orthogonal} projection), and the unique
closest point to $x$ among the points of $F$ (\ie, the \emph{metrical}
projection). Metrical and orthogonal projections can both be defined in any
Riemannian manifold (even in more general spaces), onto any closed
submanifold. It is well-known that a metrical projection is always an
orthogonal projection; this follows from the first variation formula. It is
also clear that any point admits at least one metrical projection. In the
Euclidean case, the two notions coincide.

In the case of hyperbolic spaces, any point has at most one orthogonal
projection on any totally geodesic subspace. This is an obvious consequence of
the fact that the sum of the angles of any triangle is less than $\pi$.
Therefore, the metrical and orthogonal projections coincide and are
single-valued, as in the Euclidean case.

In the case of spherical spaces, however, the situation is slightly more complicated.

\begin{lm}
\label{LPrH}Let $\Phi$ be a $p$-space of $\Omega\simeq\mathbb{\mathbb{S}}^{n}$
($0<p<n$) and $x\in\Omega\setminus\Phi$. Then $y\in\Phi$ is an orthogonal
projection of $x$ onto $\Phi$ if and only if either $y$ or $-y$ is a metrical
projection of $x$ onto $\Phi$.
\end{lm}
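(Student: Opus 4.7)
The plan is to translate both notions of projection into the ambient Euclidean space $\mathbb{R}^{n+1}$. Write $F := \mathrm{Sp}(\Phi)$, so that $\Phi = F \cap \Omega$, and let $f : \mathbb{R}^{n+1} \to F$ denote the (Euclidean) orthogonal projection. Since $F$ is a linear subspace, $\Phi$ is stable under $y \mapsto -y$, so $-y$ indeed makes sense as a point of $\Phi$; this is ultimately why both $y$ and $-y$ appear in the statement.

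For the metrical projection, I will use the formula $d(x,y) = \arccos\langle x,y\rangle$: since $\arccos$ is decreasing, $y \in \Phi$ is a metrical projection iff it maximizes $\langle x,y\rangle = \langle f(x),y\rangle$ over unit vectors $y \in F$. When $f(x) \neq 0$, Cauchy--Schwarz singles out the unique metrical projection $y_0 := f(x)/\|f(x)\|$, while its antipode $-y_0 \in \Phi$ realizes the maximum distance.

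For the orthogonal projection, I will use that $y$ qualifies iff the initial velocity at $y$ of the geodesic $yx$ on the sphere, namely $x - \langle x,y\rangle y$, is perpendicular to $T_y\Phi = F \cap y^\perp$. Decomposing $x = f(x) + (x - f(x))$ with $x - f(x) \in F^\perp$, and observing that $f(x) - \langle f(x), y\rangle y$ already belongs to $F \cap y^\perp$, the orthogonality condition collapses to $f(x) - \langle f(x), y\rangle y = 0$, i.e.\ $f(x) \in \mathbb{R} y$. Combined with $\|y\|=1$, this forces $y = \pm y_0$, and comparing with the previous paragraph yields the equivalence claimed.

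The remaining case $f(x) = 0$, i.e.\ $x \in F^\perp$, is disposed of separately: here $\langle x,y\rangle \equiv 0$ on $\Phi$, so $d(x,y) \equiv \pi/2$, and every point of $\Phi$ is simultaneously a metrical and an orthogonal projection, making the equivalence trivial. The only subtle step I expect is the reduction of the tangent-space orthogonality condition to the linear-algebra statement $f(x) \in \mathbb{R}y$, which hinges on the fact that $f(x) - \langle f(x),y\rangle y$ lies \emph{inside} $F \cap y^\perp$ itself, so being orthogonal to that subspace forces it to vanish.
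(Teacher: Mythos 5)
Your proof is correct, but it follows a genuinely different route from the paper's. The paper argues synthetically: it first observes that the orthogonal-projection property is invariant under $y\mapsto -y$ (via a decomposition of $\mathrm{Sp}(xy)$ and $\mathrm{Sp}(\Phi)$), then, normalizing to $d(x,y)\le\pi/2$, applies the spherical Pythagorean theorem $\cos b=\cos a\cos c$ to a metrical projection $z$ to force $d(x,y)=d(x,z)$; the converse direction is outsourced to the general fact, quoted earlier in the paper from the first variation formula, that a metrical projection is always an orthogonal projection. You instead linearize everything in the ambient $\mathbb{R}^{n+1}$: Cauchy--Schwarz identifies the metrical projections with $f(x)/\lVert f(x)\rVert$, and the tangent-vector computation (the key step being that $f(x)-\langle f(x),y\rangle y$ lies \emph{inside} $F\cap y^{\perp}$, so orthogonality to that subspace forces it to vanish) identifies the orthogonal projections with $\pm f(x)/\lVert f(x)\rVert$; both implications then drop out of a single characterization, with no appeal to spherical trigonometry or to the first variation formula. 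Your computation is essentially the one the paper defers to Lemma \ref{LPrS}(\ref{Q2}) (where $\cos d(x,z)=\langle f(x),z\rangle=\lVert f(x)\rVert\cos d(z,s\circ f(x))$ appears), so you have in effect merged the two lemmas; what the paper's version buys is independence of the model, while yours buys a self-contained two-way argument and handles the degenerate case $x\in\mathrm{Sp}(\Phi)^{\perp}$ explicitly rather than implicitly.
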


\begin{proof}
Assume $y\in\Phi$. The point $y$ is an orthogonal projection of $x$ if and
only if $\mathrm{Sp}\left(  xy\right)  =\mathbb{R}y\oplus U$, $\mathrm{Sp}%
\left(  \Phi\right)  =\mathbb{R}y\oplus V$, with $y\perp U$, $y\perp V$ and
$U\perp V$. This proves that $y$ is an orthogonal projection if and only if
$-y$ is so. Assume now that $d\left(  x,y\right)  \leq\pi/2$ (interchange $x$
and $-x$ if necessary). Let $z$ be a metrical projection of $x$. Put
$a=d\left(  x,y\right)  $, $b=d\left(  x,z\right)  $ and $c=d\left(
y,z\right)  $, and notice that, by the choice of $y$, $\cos\left(  a\right)  $
and $\cos\left(  b\right)  $ are non-negative. The spherical triangle $xyz$ is
right at $y$, therefore $\cos\left(  b\right)  =\cos\left(  a\right)
\cos\left(  c\right)  \leq\cos\left(  a\right)  $. On the other hand, since
$z$ is a metrical projection, $\cos\left(  b\right)  \geq\cos\left(  a\right)
$, whence $a=b$ and $y$ is also a metrical projection.
\end{proof}

%

%TCIMACRO{\FRAME{ftbpFU}{3.2283in}{2.4275in}{0pt}{\Qcb{Metrical projection on
%spheres.}}{\Qlb{F1}}{fig1.eps}{\special{ language "Scientific Word";
%type "GRAPHIC";  maintain-aspect-ratio TRUE;  display "USEDEF";
%valid_file "F";  width 3.2283in;  height 2.4275in;  depth 0pt;
%original-width 10.6666in;  original-height 8.0004in;  cropleft "0";
%croptop "1";  cropright "1";  cropbottom "0";
%filename 'Fig1.eps';file-properties "XNPEU";}}}%
%BeginExpansion
\begin{figure}
[ptb]
\begin{center}
\includegraphics[
height=2.4275in,
width=3.2283in
]%
{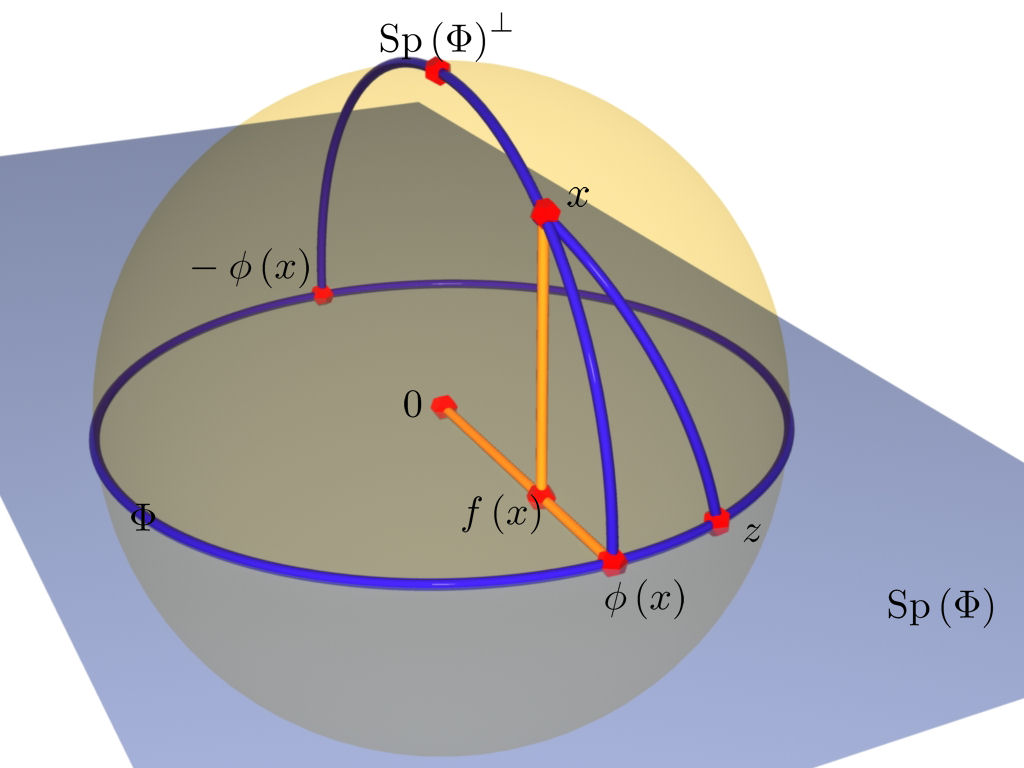}%
\caption{Metrical projection on spheres.}%
\label{F1}%
\end{center}
\end{figure}
%EndExpansion%
%TCIMACRO{\FRAME{ftbpFU}{3.2283in}{2.4275in}{0pt}{\Qcb{Metrical projection on
%hyperbolic spaces.}}{\Qlb{F2}}{fig2.eps}%
%{\special{ language "Scientific Word";  type "GRAPHIC";
%maintain-aspect-ratio TRUE;  display "USEDEF";  valid_file "F";
%width 3.2283in;  height 2.4275in;  depth 0pt;  original-width 10.6666in;
%original-height 8.0004in;  cropleft "0";  croptop "1";  cropright "1";
%cropbottom "0";  filename 'Fig2.eps';file-properties "XNPEU";}}}%
%BeginExpansion
\begin{figure}
[ptbptb]
\begin{center}
\includegraphics[
height=2.4275in,
width=3.2283in
]%
{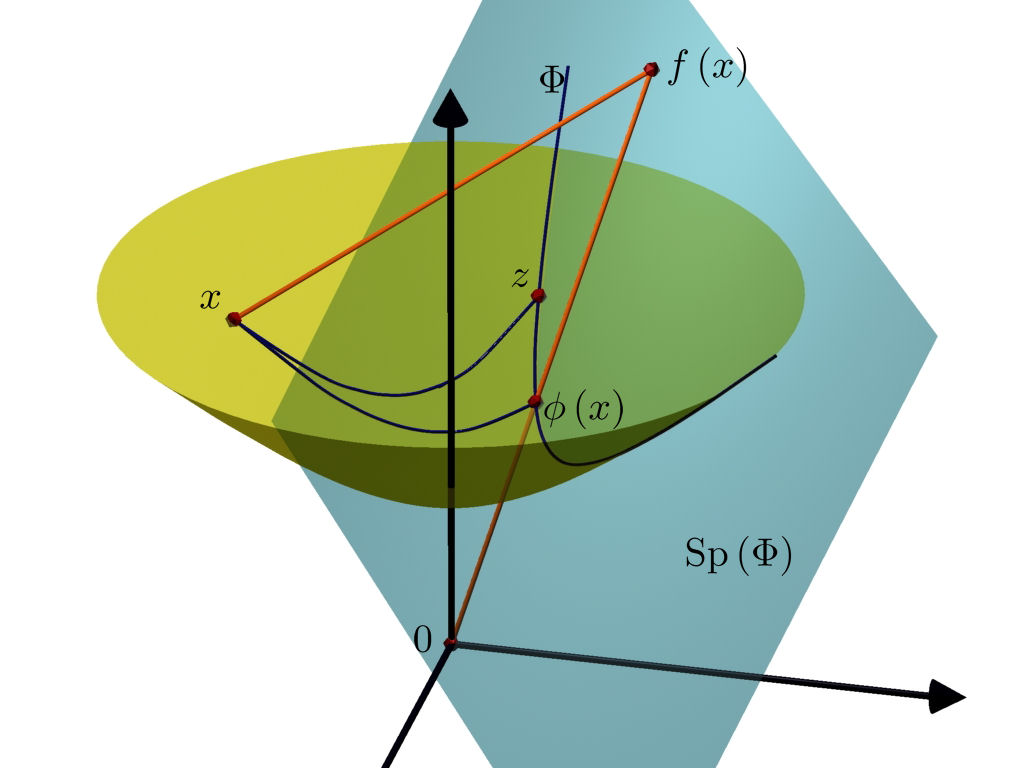}%
\caption{Metrical projection on hyperbolic spaces.}%
\label{F2}%
\end{center}
\end{figure}
%EndExpansion

\begin{lm}
\label{LPrS}Let $\Phi$ be a $p$-space of $\Omega\simeq\mathbb{\mathbb{S}}^{n}$
($0<p<n$) and $x\in\Omega$.

\begin{enumerate}
\item \label{Q1}If $x\in\mathrm{Sp}\left(  \Phi\right)  ^{\perp}$ then any
$y\in\Phi$ is a metrical projection of $x$ onto $\Phi$.

\item \label{Q2}If $x\notin\mathrm{Sp}\left(  \Phi\right)  ^{\perp}$ then $x$
has a unique metrical projection $\phi\left(  x\right)  $. The map
$\phi:\Omega\setminus\mathrm{Sp}\left(  \Phi\right)  ^{\perp}\rightarrow\Phi$
satisfies $\phi=s\circ f$, where $f:$ $\mathrm{Sp}\left(  \Omega\right)
\rightarrow\mathrm{Sp}\left(  \Phi\right)  $ is the orthogonal projection, and
$s:\mathrm{Sp}\left(  \Omega\right)  \setminus\left\{  0\right\}
\rightarrow\Omega$ is the radial projection.
\end{enumerate}

\begin{proof}
\ref{Q1}. It is clear from the fact that $d\left(  x,y\right)  =\pi/2$ for any
$y\in\Phi$.

\ref{Q2}. Let $z$ be an arbitrary point in $\Phi$; we have
\[
\cos d\left(  x,z\right)  =\left\langle x,z\right\rangle =\left\langle
f\left(  x\right)  ,z\right\rangle =\left\Vert f\left(  x\right)  \right\Vert
\cos d\left(  z,s\circ f\left(  x\right)  \right)  \text{,}%
\]
whence the unique global minimum of $d\left(  x,\cdot\right)  |\Phi$ is
$s\circ f\left(  x\right)  $ (see Figure \ref{F1}).
\end{proof}
\end{lm}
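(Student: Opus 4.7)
The plan is to exploit the relation $d(x,y)=\arccos\langle x,y\rangle$ to turn the minimization of $d(x,\cdot)|_\Phi$ into a maximization of the linear functional $y\mapsto\langle x,y\rangle$ over the unit sphere of $\mathrm{Sp}(\Phi)$, and then use the orthogonal decomposition $\mathrm{Sp}(\Omega)=\mathrm{Sp}(\Phi)\oplus\mathrm{Sp}(\Phi)^{\perp}$ together with Cauchy--Schwarz to identify the (possibly non-unique) maximizer.

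For part \ref{Q1}, if $x\in\mathrm{Sp}(\Phi)^{\perp}$ then $\langle x,y\rangle=0$ for every $y\in\Phi\subset\mathrm{Sp}(\Phi)$, so $d(x,y)=\pi/2$ is constant on $\Phi$ and every $y\in\Phi$ is trivially a metrical projection. This case is immediate and uses nothing beyond the distance formula.

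For part \ref{Q2}, I would write $x=f(x)+x'$ with $x'\in\mathrm{Sp}(\Phi)^{\perp}$; by hypothesis $f(x)\neq 0$, so $s\circ f(x)$ is well-defined, and it lies in $\Phi$ because it is a unit vector of $\mathrm{Sp}(\Phi)$ and $\Phi=\mathrm{Sp}(\Phi)\cap\Omega$. For any $y\in\Phi$, orthogonality gives $\langle x,y\rangle=\langle f(x),y\rangle$, and Cauchy--Schwarz yields $\langle f(x),y\rangle\le\|f(x)\|$ with equality iff $y=f(x)/\|f(x)\|=s\circ f(x)$. By strict monotonicity of $\arccos$, this unique maximizer of $\langle x,\cdot\rangle|_{\Phi}$ is the unique minimizer of $d(x,\cdot)|_{\Phi}$, establishing $\phi=s\circ f$. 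One should also note that $\|f(x)\|\le\|x\|=1$, so the value $\langle f(x),y\rangle$ remains in the domain where $\arccos$ is well-behaved.

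The main potential subtlety, and the only real obstacle I expect, is the uniqueness clause: one must verify that Cauchy--Schwarz equality on the constraint $\|y\|=1$ really isolates a single $y$ rather than a whole family. This holds precisely because $f(x)\neq 0$ pins the direction of $y$ uniquely, and contrast with part \ref{Q1} is instructive: when $f(x)=0$, every $y\in\Phi$ achieves equality, which is exactly the non-uniqueness observed there. Figure \ref{F1} would confirm the geometric picture of radial projection.
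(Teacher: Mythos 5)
Your proof is correct and follows essentially the same route as the paper's: both reduce minimizing $d(x,\cdot)|_\Phi$ to maximizing $y\mapsto\langle f(x),y\rangle$ over unit vectors $y\in\mathrm{Sp}(\Phi)$, the paper phrasing the equality case of Cauchy--Schwarz as $\langle f(x),z\rangle=\Vert f(x)\Vert\cos d(z,s\circ f(x))$ being maximal exactly at $z=s\circ f(x)$. Your explicit attention to the uniqueness clause and its contrast with part \ref{Q1} is a sound reading of the same argument.
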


\begin{remark}
\label{R1}Let $\Phi$ be the $2$-sphere through $y$, $z$, and $u$. If $x\in
Sp\left(  \Phi\right)  ^{\perp}$ then the triangles $xyz$, $xzu$, and $xyu$
are right at $y$, $z$ and $y$ respectively, but $yzu$ is not right in general.
Therefore, as stated in the introduction, the triangles mentioned in Variant
\ref{VG} cannot be exchanged in the case of spheres.
\end{remark}

A result similar to the second part of Lemma \ref{LPrS} holds in the
hyperbolic case (see Figure \ref{F2}).

\begin{lm}
\label{LPrH2}Let $\Phi$ be a $p$-space of $\Omega\simeq\mathbb{\mathbb{H}}%
^{n}$ ($0<p<n$). Any $x\in\Omega$ admits a unique metrical projection
$\phi\left(  x\right)  $ onto $\Phi$. Moreover the map $\phi:\Omega
\rightarrow\Phi$ satisfies $\phi=s\circ f$, where $f:$ $\mathrm{Sp}\left(
\Omega\right)  \rightarrow\mathrm{Sp}\left(  \Phi\right)  $ is the
orthogonal projection, and $s:\left\{  x\in\mathrm{Sp}\left(
\Omega\right)  |\left\langle x,x\right\rangle >0,x_{0}>0\right\}
\rightarrow\Omega$ is the radial projection.
\end{lm}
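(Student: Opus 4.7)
The plan is to follow the template of the second part of Lemma \ref{LPrS}, using this time the Lorentzian structure of $\mathbb{R}^{1,n}$.

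First, I would establish the geometry of the ambient decomposition. Since $\Phi=\mathrm{Sp}(\Phi)\cap\Omega$ is a hyperbolic $p$-space, the pseudo-Euclidean form restricted to $\mathrm{Sp}(\Phi)$ has signature $(1,p)$. Consequently, its restriction to $\mathrm{Sp}(\Phi)^{\perp}$ is negative definite, the decomposition $\mathbb{R}^{1,n}=\mathrm{Sp}(\Phi)\oplus\mathrm{Sp}(\Phi)^{\perp}$ is orthogonal and direct, and the orthogonal projection $f$ is defined on the whole of $\mathbb{R}^{1,n}$.

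Next, I would check that $f(x)$ lies in the domain of $s$. Writing $x=f(x)+x^{\perp}$ with $x^{\perp}\in\mathrm{Sp}(\Phi)^{\perp}$, the identity $\langle x,x\rangle=1$ yields
\[
\langle f(x),f(x)\rangle=1-\langle x^{\perp},x^{\perp}\rangle\geq 1,
\]
so $f(x)$ is timelike. To see that its zeroth coordinate is positive, fix any $y\in\Phi$; then $\langle f(x),y\rangle=\langle x,y\rangle=\cosh d(x,y)>0$. Since $y$ lies on the positive timelike sheet and the pseudo-inner product of two timelike vectors is positive exactly when they lie on the same sheet, $f(x)$ also lies on the positive sheet. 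Hence $s\circ f(x)\in\Omega$ is well defined.

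For the minimization, I would then compute, for any $z\in\Phi$,
\[
\cosh d(x,z)=\langle x,z\rangle=\langle f(x),z\rangle=\lambda\langle s\circ f(x),z\rangle=\lambda\cosh d(s\circ f(x),z),
\]
where $\lambda:=\sqrt{\langle f(x),f(x)\rangle}\geq 1$ and the second equality uses $x^{\perp}\perp z$. Since $\lambda>0$ and $\cosh\geq 1$ with equality only when its argument vanishes, the unique minimum of $d(x,\cdot)|\Phi$ is attained precisely at $s\circ f(x)$. This delivers in one stroke the existence and uniqueness of the metrical projection, together with the claimed formula $\phi=s\circ f$.

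The only genuine obstacle is a conceptual check rather than a calculation: one must be sure that $f(x)$ lands in the open forward timelike cone so that the radial projection $s$ is applicable. Once the Lorentzian signature argument is in place, everything else reduces to the same one-line computation that already worked in the spherical case.
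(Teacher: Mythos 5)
Your proof is correct, and its overall skeleton is the one the paper itself uses: mimic the computation of Lemma \ref{LPrS} with $\cosh$ in place of $\cos$, the only real work being to show that $f$ maps $\Omega$ into the domain $D=\{x\in\mathrm{Sp}(\Omega)\mid\langle x,x\rangle>0,\ x_{0}>0\}$ of the radial projection $s$. Where you genuinely diverge is in how you establish $f(\Omega)\subset D$. The paper reduces to dimension $3$, places $\mathrm{Sp}(\Phi)$ in the normal form $x_{0}=ax_{1}$ ($a>1$), writes out the matrix of $f$ explicitly, and reads off both $\langle f(x),f(x)\rangle=\frac{(x_{0}-ax_{1})^{2}}{a^{2}-1}+\langle x,x\rangle>0$ and the positivity of the zeroth coordinate from the formula. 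You instead argue invariantly: since $\mathrm{Sp}(\Phi)$ contains a timelike vector, the form restricted to it has signature $(1,p)$, so $\mathrm{Sp}(\Phi)^{\perp}$ is negative definite, giving $\langle f(x),f(x)\rangle=1-\langle x^{\perp},x^{\perp}\rangle\geq 1$ at once, and the forward-sheet condition follows from $\langle f(x),y\rangle=\langle x,y\rangle\geq 1>0$ for $y\in\Phi$ together with the standard fact that two timelike vectors have positive product exactly when they lie on the same sheet. Your route avoids the dimensional reduction and the coordinate computation entirely, and as a bonus shows $\langle f(x),f(x)\rangle\geq 1$ (not merely $>0$), which makes the factor $\lambda\geq 1$ in the final minimization transparent; the paper's computation, on the other hand, is entirely elementary and self-contained, needing no facts about causal characters of Lorentzian subspaces. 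Both are complete proofs.
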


\begin{proof}
The proof is similar to the proof of Lemma \ref{LPrS}, with hyperbolic cosines
instead of cosines. The only difficulty is to prove that $D\overset
{\mathrm{def}}{=}\left\{  x\in\mathrm{Sp}\left(  \Omega\right)  |\left\langle
x,x\right\rangle >0,x_{0}>0\right\}  $ is stable under $f$; \ie, $f\left(
D\right)  \subset D$. First notice that the proof reduces to the three
dimensional case. We can assume without loss of generality that $\mathrm{Sp}%
\left(  \Phi\right)  $ has equation $x_{0}=ax_{1}$ ($a>1$). By straightforward
computation, the matrix of $f$ in the canonical basis is%
\[
\frac{1}{a^{2}-1}\left(
\begin{array}
[c]{lll}%
a^{2} & -a & 0\\
a & -1 & 0\\
0 & 0 & a^{2}-1
\end{array}
\right)  \text{.}%
\]
It follows that
\[
\left\langle f\left(  x\right)  ,f\left(  x\right)  \right\rangle
=\frac{(x_{0}-ax_{1})^{2}}{a^{2}-1}+\left\langle x,x\right\rangle \text{,}%
\]
whence $\left\langle f\left(  x\right)  ,f\left(  x\right)  \right\rangle >0$
whenever $\left\langle x,x\right\rangle >0$. Moreover, $x\in D$ implies
$ax_{0}-x_{1}>0$, and consequently the first coordinate of $f\left(  x\right)
$ is positive.
\end{proof}

By Lemmas \ref{LPrH} and \ref{LPrS}, the spherical and hyperbolic variants of
Theorem \ref{TZ} are a consequence of

\begin{vr}
Let $\Phi$ be a $p$-space of $\Omega\simeq\mathbb{\mathbb{H}}^{n}$ or
$\mathbb{\mathbb{S}}^{n}$ and $\Gamma$ be a $q$-space included in $\Phi$
($0<q<p<n$). Let $\phi:D_{\phi}\rightarrow\Phi$ and $\gamma:D_{\gamma
}\rightarrow\Gamma$ be the metrical projections onto their respective images,
where $D_{\phi}=\Omega\setminus\mathrm{Sp}\left(  \Phi\right)  ^{\perp}$ and
$D_{\gamma}=\Omega\setminus\mathrm{Sp}\left(  \Gamma\right)  ^{\perp}$ if
$\Omega\simeq\mathbb{\mathbb{S}}^{n}$ and $D_{\phi}=D_{\gamma}=\Omega$
otherwise. Then $\gamma=\gamma\circ\phi$.
\end{vr}

\begin{proof}
Define $s:\left\{  x\in\mathrm{Sp}\left(  \Omega\right)  |\left\langle
x,x\right\rangle >0\right\}  \rightarrow\Omega$ by $s\left(  x\right)
=x/\sqrt{\left\langle x,x\right\rangle }$. By Lemmas \ref{LPrS} and
\ref{LPrH2}, $\phi=s\circ f$ and $\gamma=s\circ g$, where $f$ and $g$ are the
orthogonal projections onto $\mathrm{Sp}\left(  \Phi\right)  $ and
$\mathrm{Sp}\left(  \Gamma\right)  $ respectively. From the definition of $s$
and the linearity of $g$, we get $s\circ g\circ s=s\circ g$. From this fact
and Variant \ref{VP} we get
\[
\gamma\circ\phi=s\circ g\circ s\circ f=s\circ g\circ f=s\circ g=\gamma\text{.}%
\]

\end{proof}

%%%%%%%%%%%%%%%%%%%%%%%%%%%%%%%%%%%%%%%%%%

\section{The constant angle approach}

\renewcommand{\theenumi}{\roman{enumi}}

This section is devoted to our second method of proof.

\begin{lm}
Let $\Phi$ and $\Delta$ be two distinct $2$-spaces in space form $\Omega$.

\begin{enumerate}
\item \label{P1}$\gamma\overset{\mathrm{def}}{=}\Phi\cap\Delta$ is a geodesic.

\item \label{P2}The angle between $\Phi$ and $\Delta$ is constant along
$\gamma$.
\end{enumerate}
\end{lm}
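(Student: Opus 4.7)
My plan is to carry out the argument inside the ambient linear model of Section~2, in which $\Phi = \mathrm{Sp}(\Phi)\cap\Omega$ and $\Delta = \mathrm{Sp}(\Delta)\cap\Omega$ are traces of $3$-dimensional linear subspaces, and the tangent space at $p\in\Omega$ identifies with $\{v\in\mathrm{Sp}(\Omega):\langle v,p\rangle = 0\}$, with the analogous formulas $T_p\Phi = T_p\Omega\cap\mathrm{Sp}(\Phi)$ and $T_p\Delta = T_p\Omega\cap\mathrm{Sp}(\Delta)$ when $p\in\Phi$ or $p\in\Delta$.

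For part~\ref{P1} I would set $F = \mathrm{Sp}(\Phi)\cap\mathrm{Sp}(\Delta)$, so that $\gamma = \Phi\cap\Delta = F\cap\Omega$. Since $\Phi\neq\Delta$ one has $F\subsetneq\mathrm{Sp}(\Phi)$ and hence $\dim F\leq 2$; granted, as the statement implicitly requires, that $\gamma$ is more than a set of isolated points, $F$ must be exactly $2$-dimensional, and then by the description of $p$-spaces given at the end of Section~2, $\gamma = F\cap\Omega$ is a $1$-space, that is a geodesic.

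For part~\ref{P2} the plan is to exhibit a single pair of tangent vectors that, at every point of $\gamma$, realizes the dihedral angle between $\Phi$ and $\Delta$. Because $F$ is $2$-dimensional while $\mathrm{Sp}(\Phi)$ and $\mathrm{Sp}(\Delta)$ are $3$-dimensional, each of $\mathrm{Sp}(\Phi)\cap F^\perp$ and $\mathrm{Sp}(\Delta)\cap F^\perp$ is $1$-dimensional; I would pick unit vectors $a$ and $b$ spanning them respectively. For any $p\in\gamma$, $a\in\mathrm{Sp}(\Phi)$ and $\langle a,p\rangle = 0$ (since $p\in F$ and $a\perp F$), so $a\in T_p\Phi$; analogously $b\in T_p\Delta$. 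Moreover $T_p\gamma\subset F$ is orthogonal to both $a$ and $b$, so $\mathbb{R}a$ and $\mathbb{R}b$ are the orthogonal complements of $T_p\gamma$ inside $T_p\Phi$ and $T_p\Delta$; hence the dihedral angle between $\Phi$ and $\Delta$ at $p$ is the angle between the fixed ambient vectors $a$ and $b$, which is manifestly independent of $p$.

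The main subtlety I anticipate is checking that the ambient pseudo-Euclidean bracket actually computes the intrinsic angle between $a$ and $b$ on each tangent space. For the sphere this is built into the definition, while for the hyperbolic space I would observe that all tangent vectors to $\Omega$ have the same causal type, so $-\langle\cdot,\cdot\rangle$ restricts to the Riemannian metric on every $T_p\Omega$ and the angle between $a$ and $b$ is therefore computed by the same formula at each point of $\gamma$.
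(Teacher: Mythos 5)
Part (i) of your argument has a genuine gap: you assume the key point rather than proving it. The clause ``granted, as the statement implicitly requires, that $\gamma$ is more than a set of isolated points'' converts the conclusion of part (i) into a hypothesis. The statement is only true because $\Omega$ is implicitly $3$-dimensional (the paper's proof explicitly reduces to $\mathbb{S}^3$ or $\mathbb{H}^3$); there the ambient space has dimension $4$, the two spans are distinct hyperplanes, so $\mathrm{Sp}(\Phi)+\mathrm{Sp}(\Delta)=\mathrm{Sp}(\Omega)$ and the Grassmann formula gives $\dim\bigl(\mathrm{Sp}(\Phi)\cap\mathrm{Sp}(\Delta)\bigr)=3+3-4=2$. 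That dimension count is exactly what you skipped, and it cannot be ``granted'': for two generic $2$-spheres in $\mathbb{S}^4$ the intersection of the spans is a line and $\gamma$ is a pair of antipodal points, so non-discreteness really is a consequence of $\dim\Omega=3$, not a harmless reading of the statement. Two smaller omissions in the same spirit: the lemma is stated for an arbitrary space form, and the paper first notes that the statement is local before passing to the simply connected models, which your ambient-linear-model setup presupposes; and the Euclidean case, not covered by the sphere/hyperboloid model, needs at least a word.

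Part (ii), by contrast, is correct and takes a genuinely different route from the paper. The paper chooses ambient unit normals $u,v$ to the two $3$-dimensional spans (again using that they are hyperplanes of $\mathrm{Sp}(\Omega)$) and shows that the normal to $T_z\Delta$ inside $T_z\Omega$ lies in $\mathbb{R}u$, so the angle of the normals is the fixed angle $\measuredangle(\mathbb{R}u,\mathbb{R}v)$. You instead fix $a\in\mathrm{Sp}(\Phi)\cap F^{\perp}$ and $b\in\mathrm{Sp}(\Delta)\cap F^{\perp}$ and verify that at every $p\in\gamma$ these same ambient vectors are the orthogonal complements of $T_p\gamma$ inside $T_p\Phi$ and $T_p\Delta$, so the dihedral angle is the fixed angle between $a$ and $b$. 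This is sound; the one implicit ingredient is that the form restricted to $F$ and to $\mathrm{Sp}(\Phi)$, $\mathrm{Sp}(\Delta)$ is nondegenerate (true because each of these subspaces meets $\Omega$), which is what makes $\mathrm{Sp}(\Phi)\cap F^{\perp}$ one-dimensional and transverse to $F$. Your closing remark that all tangent vectors to the hyperboloid have the same causal type, so the ambient bracket computes the Riemannian angle, correctly handles the pseudo-Euclidean subtlety. A side benefit of your version is that it never uses codimension one, so it would prove constancy of the angle for any two $2$-spaces meeting along a geodesic in a space form of arbitrary dimension, whereas the paper's normal-vector argument is tied to $\dim\Omega=3$.
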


\begin{proof}
The statements are local; therefore it is sufficient to prove them for simply
connected space forms. The Euclidean case is clear, so we can assume
$\Omega\simeq\mathbb{S}^{3}$ or $\Omega\simeq\mathbb{H}^{3}$.

\ref{P1}. Put $D=\mathrm{Sp}\left(  \Delta\right)  $ and $F=\mathrm{Sp}\left(
\Phi\right)  $. Then $\gamma=\Omega\cap D\cap F$ is a $d$-space, with
$d=\dim\left(  D\cap F\right)  -1$. Since $\Phi$ and $\Delta$ are $2$-spaces,
$\dim\left(  D\right)  =\dim\left(  G\right)  =3$. By hypothesis we have
$D\neq F$, whence $D+F=\mathrm{Sp}\left(  \Omega\right)  $. Now
\[
\dim\left(  D\cap F\right)  =\dim\left(  D\right)  +\dim\left(  F\right)
-\dim\left(  D+F\right)  =2\text{.}%
\]

\ref{P2}. Let $z$ be a point of $\gamma$. Let $u$, $v\in\mathrm{Sp}\left(
\Omega\right)  $ be unit normal vectors to $\mathrm{Sp}\left(  D\right)  $ and
$\mathrm{Sp}\left(  F\right)  $ respectively. Note that, in the case
$\Omega\simeq\mathbb{H}^{3}$, $D$ and $F$ cannot be tangent to the isotropic
cone, so $\mathrm{Sp}\left(  D\right)  =u^{\perp}$ and $\mathrm{Sp}\left(
D\right)  =v^{\perp}$. Let $n_{D}\left(  z\right)  $ (resp. $n_{F}\left(
z\right)  $) be a vector of $T_{z}\Omega$ normal to $T_{z}\Delta$ (resp.
$T_{z}\Phi$). Obviously, $T_{z}\Omega=z^{\perp}$ and%
\[
T_{z}\Delta=D\cap T_{z}\Omega=u^{\perp}\cap z^{\perp}=\left(  \mathbb{R}%
u+\mathbb{R}z\right)  ^{\perp}\text{.}%
\]
Hence $n_{D}\left(  z\right)  \in\left(  \mathbb{R}u+\mathbb{R}z\right)  \cap
z^{\perp}=\mathbb{R}u$. It follows that $\measuredangle\left(  \mathbb{R}%
n_{D}\left(  z\right)  ,\mathbb{R}n_{F}\left(  z\right)  \right)
=\measuredangle\left(  \mathbb{R}u,\mathbb{R}v\right)  $ does not depend on
$z\in\gamma$.
\end{proof}

If the constant angle between two $2$-spaces of a $3$-dimensional space form
is $\pi/2$, then the two spaces are said to be orthogonal.

\begin{proof}
[Second proof of Theorem \ref{TZ} for space forms.]Denote by $\Omega$ the
$3$-dimensional space form where everything takes place. Let $\Gamma$ be the
$2$-space containing $x$, $y$ and $z$. Let $u\in T_{z}\Omega$ be a vector
normal to $\Gamma$. Since $\Gamma\supset xy\perp\Pi$, $\Gamma$ and $\Pi$ are
orthogonal, whence $T_{z}\Lambda=\left(  T_{z}\left(  yz\right)  ^{\perp}\cap
T_{z}\Pi\right)  $, $\left(  T_{z}\left(  yz\right)  ^{\perp}\cap T_{z}%
\Pi\right)  \perp\left(  T_{z}\left(  yz\right)  ^{\perp}\cap T_{z}%
\Gamma\right)  $, and $\left(  T_{z}\left(  yz\right)  ^{\perp}\cap
T_{z}\Gamma\right)  =\left(  T_{z}\left(  yz\right)  +\mathbb{R}u\right)
^{\perp}$. Hence $T_{z}\Lambda=\mathbb{R}u$, \ie, $T_{z}\Lambda\perp
T_{z}\Gamma\supset T_{z}\left(  xy\right)  $.
\end{proof}

\bigskip

\textbf{Acknowledgement.} The second and third authors were supported by the
grant PN-II-ID-PCE-2011-3-0533 of the Romanian National Authority for
Scientific Research, CNCS-UEFISCDI.

For various remarks, we thank Dorin Cheptea, Cristina Cosoreanu, Liana David,
Radu Pantilie and Tudor Zamfirescu.

%%%%%%%%%%%%%%%%%%%%%%%%%%%%%%%%%%%%

\bigskip

Jin-ichi Itoh

\noindent{\small Faculty of Education, Kumamoto University \newline Kumamoto
860-8555, JAPAN \newline j-itoh@gpo.kumamoto-u.ac.jp}

\medskip

J\"oel Rouyer

\noindent{\small Institute of Mathematics ``Simion Stoilow'' of the Romanian
Academy, \newline P.O. Box 1-764, Bucharest 70700, ROMANIA \newline
Joel.Rouyer@ymail.com, Joel.Rouyer@imar.ro}

\medskip

Costin V\^{\i}lcu
%\hskip

\noindent{\small Institute of Mathematics ``Simion Stoilow'' of the Romanian
Academy, \newline P.O. Box 1-764, Bucharest 70700, ROMANIA \newline
Costin.Vilcu@imar.ro}

\end{document}